\newcommand {\R} {\mathbb R}
\newcommand {\N} {\mathbb N}
\newcommand{\bx}{\bar x}
\newcommand{\la}{\lambda}
\newcommand{\de}{\delta}
\newcommand{\ds}{\displaystyle}
\def\lsc{lower semicontinuous}
\def\LHS{left-hand side}
\def\RHS{right-hand side}
\newtheorem{thm}{Theorem}%[section]
\newtheorem{cor}[thm]{Corollary}
\newtheorem{prop}[thm]{Proposition}
\theoremstyle{definition}
\newtheorem{defn}[thm]{Definition}
\theoremstyle{remark}
\newtheorem{rem}[thm]{Remark}
\begin{document}
\title{Borwein--Preiss Variational Principle Revisited}
\author[fu]{A. Y. Kruger\corref{cor1}}
\ead{a.kruger@federation.edu.au}
\author[nu]{S. Plubtieng}
\ead{somyotp@nu.ac.th}
\author[nu]{T. Seangwattana}
\ead{seangwattana\_t@hotmail.com}

\cortext[cor1]{Corresponding author}
\address[fu]{Centre for Informatics and Applied Optimization,
Faculty of Science and Technology, Federation University Australia,
Ballarat, Victoria 3353, Australia}
\address[nu]{Department of Mathematics, Faculty of Science, Naresuan University, Phitsanulok 65000, Thailand}
\date{}
\begin{abstract}
In this article, we refine and slightly strengthen the metric space version of the Borwein--Preiss variational principle due to Li, Shi, \emph{J. Math. Anal. Appl.} 246, 308--319 (2000), clarify the assumptions and conclusions of their Theorem~1 as well as Theorem~2.5.2 in Borwein, Zhu, \emph{Techniques of Variational Analysis}, Springer (2005) and streamline the proofs.
Our main result, Theorem~\ref{main} is formulated in the metric space setting.
When reduced to Banach spaces (Corollary~\ref{T4}), it extends and strengthens the smooth variational principle established in Borwein, Preiss, \emph{Trans. Amer. Math. Soc.} 303, 517–-527 (1987) along several directions.
\end{abstract}

\begin{keyword}
Borwein-Preiss variational principle \sep smooth variational principle \sep gauge-type function \sep perturbation
\end{keyword}
\maketitle

%%%%%%%%%%%%%%%%%%%%%%%%%%%%%%%%%%%%%%%%%%%%%%%%%%%%%%%%%%%%%%%%%%%%%%%%%%%%%%%%%%%%%%%%%%%%%%%%%%%%%%%%%%%%%%%%%%%%%%%%%%%%%%%%%%%%%%%%%%%%%%%
\section{Introduction}
The celebrated \emph{Ekeland variational principle} \cite{Eke74} has been around for more than four decades.
It almost immediately became one of the main tools in optimization theory and various branches of analysis.
The number of publications containing ``Ekeland variational principle'' in their title has exceeded 200.
Several other variational principles followed: due to Stegall \cite{Ste78}, Borwein--Preiss \cite{BorPre87}, Deville--Godefroy--Zizler \cite{DevGodZiz93.2} and  others.

Given an ``almost minimal'' point of a function, a variational principle guaranties the existence of another point and a suitably perturbed function for which this point is (strictly) minimal and provides estimates of the (generalized) distance between the points and also the size of the perturbation.
Typically variational principles assume the underlying space to be complete metric (quasi-metric) or Banach and the function (sometimes vector- or set-va\-lued) to possess a kind of semicontinuity.

The principles differ mainly in terms of the class of perturbations they allow.
The perturbation guaranteed by the original Ekeland variational principle (valid in general complete metric spaces) is nonsmooth even if the underlying space is a smooth Banach space and the function is everywhere Fr\'echet differentiable.
In contrast, the \emph{Borwein--Preiss variational principle} (originally formulated in the Banach space setting) works with a special class of perturbations determined by the norm; when the space is \emph{smooth} (i.e., the norm is Fr\'echet differentiable away from the origin), the perturbations are smooth too.
Because of that, the Borwein--Preiss variational principle is referred to in \cite{BorPre87} as the \emph{smooth variational principle}.
It has found numerous applications and paved the way for a number of other smooth principles including the one due to Deville--Godefroy--Zizler \cite{DevGodZiz93.2}.

The statement of the next theorem mostly follows that of \cite[Theorem~2.5.3]{BorZhu05}.

\begin{thm}[Borwein--Preiss variational principle]\label{BP}
Let $(X,\|\cdot\|)$ be a Banach space and function $f : X \rightarrow \mathbb{R}\cup\{+\infty\}$ be \lsc.
Suppose that $\epsilon>0$, $\la>0$ and $p\ge1$.
If $x_{0} \in X$ satisfies
\begin{equation}\label{BP0}
f(x_{0})<\inf_{X} f + \epsilon,
\end{equation}
then there exist a point $\bar x\in X$ and sequences $\{x_{i}\}_{i=1}^\infty\subset X$ and $\{\delta_{i}\}_{i=0}^\infty\subset\R_+\setminus\{0\}$ such that $x_i\to\bx$ as $i\to\infty$, $\sum_{i=0}^{\infty}\delta_{i}=1$, and
\begin{enumerate}
\item
$\|\bar x-x_{i}\|\le\la$ $(i=0,1,\ldots)$;
\item
$\ds f(\bar x) +\frac{\epsilon}{\la^p}\sum\limits_{i=0}^{\infty} \delta_{i}\|\bar x-x_{i}\|^p \le f(x_0)$;
\item
$\ds f(x) +\frac{\epsilon}{\la^p}\sum\limits_{i=0}^{\infty}\delta_{i} \|x-x_{i}\|^p
> f(\bar x) + \frac{\epsilon}{\la^p} \sum\limits_{i=0}^{\infty}\delta_{i}\|\bar x-x_{i}\|^p$
for all
$x \in X\setminus\{\bar x\}$.
\end{enumerate}
\end{thm}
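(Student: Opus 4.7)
The plan is to follow the iterative scheme introduced by Borwein--Preiss: construct a nested sequence of nonempty closed sublevel sets $S_0\supseteq S_1\supseteq\cdots$ with diameters shrinking to zero, and then invoke completeness of $X$ via Cantor's intersection theorem to obtain $\bx$ as the unique common point.

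I would first fix a sequence $\{\delta_i\}_{i=0}^\infty\subset(0,1)$ summing to $1$ (the precise choice will be dictated by conclusion~(i)) and put
\[
g_n(x):=f(x)+\frac{\epsilon}{\la^p}\sum_{i=0}^{n}\delta_i\|x-x_i\|^p.
\]
Starting from the given $x_0$, I would recursively pick $x_{n+1}\in S_n:=\{x\in X:g_n(x)\le g_n(x_n)\}$ as an $\eta_{n+1}$-approximate minimizer of $g_n$ on $S_n$, with the slacks $\eta_n>0$ chosen to decay suitably fast. Each $S_n$ is nonempty (it contains $x_n$) and closed by lower semicontinuity of $g_n$. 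Nesting $S_{n+1}\subseteq S_n$ follows from $g_{n+1}(x)\le g_{n+1}(x_{n+1})=g_n(x_{n+1})\le g_n(x_n)$ for $x\in S_{n+1}$. Combining with the approximate-minimum inequality $g_n(x_{n+1})\le g_n(x)+\eta_{n+1}$ for $x\in S_n$ yields $\frac{\epsilon}{\la^p}\delta_{n+1}\|x-x_{n+1}\|^p\le\eta_{n+1}$ for $x\in S_{n+1}$, which controls the diameters.

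Cantor's theorem then produces a unique $\bx\in\bigcap_n S_n$, and $x_n\to\bx$ since $x_n,\bx\in S_{n-1}$ and $\mathrm{diam}(S_{n-1})\to0$. Conclusion~(ii) falls out by chaining the inductive inequalities $g_n(x_{n+1})\le g_n(x_n)\le\cdots\le g_0(x_0)=f(x_0)$ and passing to the limit using lower semicontinuity of $f$ together with Fatou's lemma on the partial sums. For~(iii), if $x\ne\bx$ then $x\notin S_N$ for some $N$, so $g_N(x)>g_N(x_N)\ge g_N(\bx)$; combined with the fact that the decreasing sequence $\{g_n(x_n)\}$ converges to $g_\infty(\bx):=f(\bx)+\frac{\epsilon}{\la^p}\sum_i\delta_i\|\bx-x_i\|^p$, this upgrades to the strict inequality $g_\infty(x)>g_\infty(\bx)$.

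The main obstacle is conclusion~(i). From~(ii) alone one only extracts the weaker bound $\|\bx-x_i\|\le\la\,\delta_i^{-1/p}$, which exceeds $\la$ whenever $\delta_i<1$, so a finer argument is needed. The strategy I would use is the telescoping estimate $\|\bx-x_i\|\le\sum_{k\ge i}\|x_{k+1}-x_k\|\le\sum_{k\ge i}\mathrm{diam}(S_k)$, together with a tuning of $\{\delta_i\}$ and $\{\eta_i\}$ engineered so that the series of diameters is summable with total at most $\la$. This tuning exploits the strict slack $f(x_0)-\inf_X f<\epsilon$ (which buys a positive margin in the first-step estimate) and is the most delicate ingredient of the proof.
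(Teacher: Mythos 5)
Your construction is essentially the one the paper uses: the paper obtains this statement by specializing its Theorem~\ref{main} to $\rho(x,y)=\|x-y\|^p$ (Corollary~\ref{T4}), and the proof of Theorem~\ref{main} is exactly your scheme --- approximate minimization of the partially perturbed functions over a nested sequence of closed sets, Cantor's intersection theorem, and the same derivations of (ii) and (iii) from the chained inequalities and from $x\notin S_N$. The one genuine divergence is conclusion (i), which you single out as the most delicate ingredient and attack by telescoping $\|\bx-x_i\|\le\sum_{k\ge i}\|x_{k+1}-x_k\|$. That detour is avoidable: since $\bx\in\bigcap_n S_n$, the very inequality you derived for the diameter control, $\frac{\epsilon}{\la^p}\delta_{n+1}\|x-x_{n+1}\|^p\le\eta_{n+1}$ for $x\in S_{n+1}$, applies with $x=\bx$ and gives $\|\bx-x_{n+1}\|\le\la$ outright once $\eta_{n+1}\le\epsilon\,\delta_{n+1}$ (this is inequality \eqref{su7} in the paper, which in fact yields $\|\bx-x_i\|\le\epsilon_i$ for an arbitrary prescribed null sequence); and for $i=0$, membership $\bx\in S_0$ gives $\frac{\epsilon}{\la^p}\delta_0\|\bx-x_0\|^p\le f(x_0)-f(\bx)\le f(x_0)-\inf_X f$, so $\|\bx-x_0\|\le\la$ as soon as $\delta_0\ge(f(x_0)-\inf_X f)/\epsilon$, and the strict inequality \eqref{BP0} is used precisely to pick such a $\delta_0<1$ while keeping $\sum_i\delta_i=1$ (this is condition \eqref{ies2} and inequality \eqref{su1} in the paper). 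Your telescoping version can be tuned to work as you describe, exploiting the same margin from \eqref{BP0} at the first step, but it is strictly harder and gives weaker per-index control of $\|\bx-x_i\|$ than the direct membership argument already sitting in your own estimates.
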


When $X$ is a smooth space and $p>1$, the perturbation functions involved in (ii) and (iii) of the above theorem are smooth.

Among the known extensions of the Borwein--Preiss variational principle, we mention the work by Li and Shi \cite[Theorem 1]{LiShi00}, where the principle was extended to metric spaces (of course at the expense of losing the smoothness) by replacing $\|\cdot\|^p$ in (ii) and (iii) by a more general ``gauge-type'' function $\rho:X\times X\to\R$.
They also strengthened Theorem~\ref{BP} by showing the existence of $\bar x$ and $\{x_{i}\}_{i=1}^\infty$ validating the appropriately adjusted conclusions of the theorem for any sequence $\{\delta_{i}\}_{i=0}^\infty\subset\R_+$ with $\de_0>0$.
This last advancement allowed the authors to cover the Ekeland variational principle which corresponds to setting $\de_i=0$ for $i=1,2,\ldots$
The result by Li and Shi was later adapted in Theorem~2.5.2 in the book by Borwein and Zhu \cite{BorZhu05}.

Another important advancement was made by Loewen and Wang \cite[Theorem 2.2]{LoeWan01} who constructed in the Banach space setting a special class of perturbations subsuming those used in Theorem~\ref{BP} and established strong minimality in the analogue of the condition (iii) above; cf. \cite[Definition~2.1]{LoeWan01}.
Bednarczuk and Zagrodny \cite{BedZag10} extended recently the Borwein--Preiss variational principle to vector-valued functions.

In this article which follows the ideas of \cite{BorPre87, LiShi00,BorZhu05}, we refine and slightly strengthen the metric space version of the Borwein--Preiss variational principle due to Li and Shi \cite{LiShi00}, clarify the assumptions and conclusions of \cite[Theorem 1]{LiShi00} and \cite[Theorem~2.5.2]{BorZhu05} and streamline the proofs.
When reduced to Banach spaces (Corollary~\ref{T4}), our main result extends and strengthens Theorem~\ref{BP} along several directions.

1) The assumption $p\ge1$ for the power index in (ii) and (iii) is relaxed to just $p>0$.
Of course, if $p<1$, then the perturbation function involved in (ii) and (iii) is not convex.

2) The strict inequality \eqref{BP0} is replaced by the corresponding nonstrict one:
\begin{equation}\label{00}
f(x_{0}) \le \inf_{X} f + \epsilon.
\end{equation}
Note that $\de_0$ must satisfy
\begin{gather}\label{ies2}
\delta_{0}\ge(f(x_{0})-\inf_{X} f)/\epsilon
\end{gather}
(see Corollary~\ref{T4}).
Hence, when $f(x_{0})=\inf_{X} f + \epsilon$, one has $\delta_{0}\ge1$ and cannot ensure the equality $\sum_{i=0}^{\infty}\delta_{i}=1$.

3) Instead of assuming the existence of $\bar x$, $\{x_{i}\}_{i=1}^\infty$ and $\{\delta_{i}\}_{i=0}^\infty$ with $\sum_{i=0}^{\infty}\delta_{i}=1$ as in Theorem~\ref{BP}, we show that $\bar x$ and $\{x_{i}\}_{i=1}^\infty$ exist for any sequence $\{\delta_{i}\}_{i=0}^\infty\subset\R_+$ (the fact first observed in \cite{LiShi00}) with $\delta_{0}$ satisfying \eqref{ies2}.
The latter restriction still leaves one enough freedom to choose positive numbers $\de_i$ $(i=1,2,\ldots)$ such that $\sum_{i=0}^{\infty}\delta_{i}<\infty$, thus ensuring the  convergence of the series involved in the \LHS\ of condition (iii).
In the case of the strict inequality \eqref{BP0}, one can obviously satisfy that restriction with some $\delta_{0}<1$ and choose positive numbers $\de_i$ $(i=1,2,\ldots)$ such that $\sum_{i=0}^{\infty}\delta_{i}=1$.

4) Similarly to \cite{LiShi00}, conditions (ii) and (iii) in Theorem~\ref{BP} are complemented by a pair of conditions which correspond to the case when only finitely many elements of the sequence $\de_i$ $(i=0,1,\ldots)$ are nonzero.
These conditions strengthen the corresponding conditions in \cite{LiShi00}.

5) The case when the series $\sum_{i=0}^{\infty}\delta_{i}$ is divergent is not excluded.
We show that the series involved in condition (ii) (and the \RHS\ of condition (iii)) is still convergent.
However, the series in the \LHS\ of condition (iii) can be divergent for some $x\in X$.

6) The inequalities in (i) can be replaced by $\|\bar x-x_{0}\|\le\la$ and $\|\bar x-x_{i}\|\le\epsilon_i$ $(i=1,2,\ldots)$, where $\{\epsilon_{i}\}_{i=1}^\infty$ is an arbitrary sequence of positive numbers.
\medskip

The rest of the article is subdivided into three sections.
In the next one, we present and prove our main result extending the Borwein--Preiss variational principle in metric spaces.
Section~\ref{S3} contains some discussions of the main result and provides several corollaries.
In the final Section~\ref{S4}, we identify developing a ``smooth'' regularity theory as a possible application of the extended Borwein--Preiss variational principle.

Our basic notation is standard, cf. \cite{RocWet98,BorZhu05,DonRoc14}.
$X$ stands for either a metric or a Banach space.
A metric or a norm in $X$ are denoted by $d(\cdot,\cdot)$ or $\|\cdot\|$, respectively.
$\N$ denotes the set of all positive integers.

\section{Extended Borwein--Preiss Variational Principle}\label{S2}

In this section, we extend the metric space version of the Borwein--Preiss variational principle due to Li and Shi \cite{LiShi00} (cf. \cite{BorZhu05}) which subsumes also the Ekeland variational principle.

The theorem below involves sequences indexed by $i\in\N$.
The set of all indices is subdivided into two groups: with $i<N$ and $i\ge N$ where $N$ is an `integer' which is allowed to be infinite: $N\in\N\cup\{+\infty\}$.
If $N=+\infty$, then the first subset of indices is infinite, while the second one is empty.
This trick allows us to treat the cases of a finite and infinite set of indices within the same framework.
Another convention in this section concerns summation over an empty set of indices: $\sum_{k=0}^{-1}a_k=0$.

Following \cite[Theorem 1]{LiShi00} and \cite[Definition 2.5.1]{BorZhu05}, we are going to employ in the rest of the article the following concept of \emph{gauge-type} function.

\begin{defn}\label{D2}
Let $(X,d)$ be a metric space. We say that a continuous function $\rho : X\times X\rightarrow [0,\infty]$ is a gauge-type function if
\begin{enumerate}
\item
$\rho(x,x) = 0$ for all $x \in X,$
\item
for any $\epsilon > 0$ there exists $\delta > 0$ such that, for all $y,z \in X$, inequality $\rho(y,z) \leq \delta$ implies $d(y,z) < \epsilon.$
\end{enumerate}
\end{defn}

Here comes the main result.

\begin{thm}[Extended Borwein--Preiss variational principle]\label{main}
Let $X$ be a complete metric space and a function $f : X \rightarrow \mathbb{R}\cup\{+\infty\}$ be \lsc.
Suppose that $\rho$ is a gauge-type function, $\epsilon>0$, $\{\epsilon_{i}\}_{i=1}^\infty$ and $\{\delta_{i}\}_{i=0}^\infty$ are sequences such that
\begin{itemize}
\item
$\epsilon_{i}>0$ for all $i\in\N$ and $\epsilon_{i}\downarrow0$ as $i\to\infty$;
\item
$\de_{i}>0$ for all $i<N$ and $\de_{i}=0$ for all $i\ge N$, where $N\in\N\cup\{+\infty\}$.
\end{itemize}
If $x_{0} \in X$ satisfies \eqref{00},
then there exist a point $\bar x\in X$ and a sequence $\{x_{i}\}_{i=1}^\infty\subset X$ such that $x_i\to\bx$ as $i\to\infty$ and
\begin{enumerate}
\item
$\ds\rho(\bx,x_{0}) \le \epsilon/\de_0$;
\item
$\ds\rho(\bx,x_{i}) \le \epsilon_i$ $(i=1,2,\ldots)$;
\item
if $N=+\infty$, then the
series $\sum_{i=0}^{\infty} \delta_{i}\rho(\bar x,x_{i})$ is convergent and
\begin{equation}\label{su11}
f(\bar x) + \sum\limits_{i=0}^{\infty} \delta_{i}\rho(\bar x,x_{i}) \le f(x_0);
\end{equation}
otherwise the
series $\sum_{i=N-1}^{\infty}\rho(x_{i+1},x_{i})$ is convergent and
\begin{multline}\label{su11-2}
f(\bx) + \sum_{i=0}^{N-2}\delta_{i} \rho(\bx,x_{i})
\\
+ \delta_{N-1}\sup_{n\ge N-1}\left(\sum_{i=N-1}^{n-1} \rho(x_{i+1},x_{i})+ \rho(\bx,x_{n})\right) \le f(x_{0});
\end{multline}
\item
if $N=+\infty$, then
\begin{equation}\label{su10}
f(x) + \sum\limits_{i=0}^{\infty}\delta_{i}\rho(x,x_{i}) > f(\bar x)+ \sum\limits_{i=0}^{\infty}\delta_{i}\rho(\bar x,x_{i})
\quad\mbox{for all}\quad
x \in X\setminus\{\bar x\};
\end{equation}
otherwise,
for any $x \in X\setminus\{\bar x\}$, there exists an $m_0\ge N$ such that, for all $m\ge m_0$,
\begin{multline}\label{us10}
f(x) + \sum_{i=0}^{N-2}\delta_{i}\rho(x,x_{i}) + \delta_{N-1}\rho(x,x_{m})
> f(\bx)
\\
+ \sum_{i=0}^{N-2}\delta_{i} \rho(\bx,x_{i})
+\delta_{N-1}\sup_{n\ge m} \left(\sum_{i=m}^{n-1}\rho(x_{i+1},x_{i})+ \rho(\bx,x_{n})\right).
\end{multline}
\end{enumerate}
\end{thm}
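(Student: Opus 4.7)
My plan is an iterative approximate-minimization construction of Li--Shi \cite{LiShi00} type, unified across the two cases ($N<\infty$ and $N=\infty$) by introducing auxiliary positive weights $\hat\delta_n:=\delta_n$ for $n<N-1$ and $\hat\delta_n:=\delta_{N-1}$ for $n\ge N-1$. I would set
\[
\psi_n(x):=f(x)+\sum_{i=0}^{\min\{n,N-1\}-1}\delta_i\,\rho(x,x_i)+\hat\delta_n\,\rho(x,x_n),
\]
with the sum empty for $n=0$ (so $\psi_0=f+\delta_0\rho(\cdot,x_0)$), and define sublevel sets $S_n:=\{x\in X:\psi_n(x)\le\psi_n(x_n)\}$, which are nonempty (contain $x_n$) and closed by lower semicontinuity of $f$ and continuity of $\rho$. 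Given $x_n$, I would recursively choose $x_{n+1}\in S_n$ with $\psi_n(x_{n+1})\le\inf_{S_n}\psi_n+\eta_{n+1}$, picking $\eta_{n+1}>0$ small enough that, via Definition~\ref{D2}(ii), $\rho(x_{n+1},x_n)\le\epsilon_{n+1}$ and hence $d(x_{n+1},x_n)<\epsilon_{n+1}$.

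Next I would extract the limit $\bar x$. When $N=\infty$, a direct calculation yields $\psi_{n+1}\ge\psi_n$ and $\psi_{n+1}(x_{n+1})=\psi_n(x_{n+1})\le\psi_n(x_n)$, so $S_{n+1}\subseteq S_n$; combined with $\operatorname{diam}_d(S_n)\to 0$, the Cantor intersection theorem (using completeness) gives $\bigcap_n S_n=\{\bar x\}$ and $x_n\to\bar x$. When $N<\infty$ the sets $S_n$ for $n\ge N-1$ are \emph{not} nested, but the identity $\psi_{n+1}(x_{n+1})=\psi_n(x_{n+1})-\delta_{N-1}\rho(x_{n+1},x_n)$ together with $\psi_n(x_{n+1})\le\psi_n(x_n)$ telescopes to $\delta_{N-1}\sum_{n\ge N-1}\rho(x_{n+1},x_n)\le f(x_0)-\inf_X f$, and combined with the step-wise control $\rho(x_{n+1},x_n)\le\epsilon_{n+1}$ it forces $\{x_n\}$ to be $d$-Cauchy; completeness yields $\bar x$ and lower semicontinuity gives $\bar x\in S_n$ for every $n$.

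Finally I would verify (i)--(iv). For (i), $\bar x\in S_0$ combined with \eqref{00} and $f(\bar x)\ge\inf_X f$ gives $\delta_0\rho(\bar x,x_0)\le\epsilon$. For (ii), $\bar x\in S_n$ together with the diameter control of the first step yields $\rho(\bar x,x_n)\le\epsilon_n$. For (iii), I would telescope $\psi_n(x_n)\le\psi_{n-1}(x_{n-1})$ in the infinite case, resp.\ $\psi_n(x_n)+\delta_{N-1}\rho(x_n,x_{n-1})\le\psi_{n-1}(x_{n-1})$ for $n\ge N$ in the finite case, combine with $\psi_n(\bar x)\le\psi_n(x_n)$, and pass to the limit (or supremum) in $n$ using lower semicontinuity of $f$ and continuity of $\rho$; this produces \eqref{su11} and \eqref{su11-2} together with convergence of the series involved. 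For (iv), fix $x\ne\bar x$; Definition~\ref{D2}(ii) forces $\rho(x,\bar x)>0$, and since $x_n\to\bar x$, the point $x$ lies outside $S_m$ for all large $m$, so $\psi_m(x)>\psi_m(x_m)$; combining this strict inequality with the telescope bound yields \eqref{su10} and \eqref{us10}.

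The main obstacle I expect is the finite-$N$ case: not only must $\{x_n\}$ be shown $d$-Cauchy \emph{without} a Cantor-intersection argument (since the $S_n$'s are not nested for $n\ge N-1$), but the ``$\sup_{n\ge m}$'' expressions in \eqref{su11-2} and \eqref{us10} require carefully coordinating the telescope with the near-minimization bounds across all $n\ge N-1$. Upgrading the resulting limit inequality in (iv) from $\ge$ to a strict $>$ similarly demands using $\rho(x,\bar x)>0$ quantitatively rather than asymptotically.
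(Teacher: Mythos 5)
Your $N=\infty$ branch essentially reproduces the paper's argument, but the finite-$N$ branch has a genuine gap, located exactly where you flag the ``main obstacle''. Having observed (correctly) that your sublevel sets $S_n:=\{x\in X:\psi_n(x)\le\psi_n(x_n)\}$ are not nested for $n\ge N-1$, you propose to recover convergence of $\{x_n\}$ from the telescoped bound $\delta_{N-1}\sum_{n\ge N-1}\rho(x_{n+1},x_n)\le f(x_0)-\inf_X f$ together with $\rho(x_{n+1},x_n)\le\epsilon_{n+1}$. This does not force $\{x_n\}$ to be $d$-Cauchy: a gauge-type function carries no triangle inequality and no quantitative comparison with $d$, so summability of the consecutive values $\rho(x_{n+1},x_n)$ gives no control of $d(x_m,x_{m'})$ for non-consecutive indices (take $\rho=d^2$, which is gauge-type, and $d(x_{n+1},x_n)=1/n$ to see that the claimed implication fails). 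The non-nestedness also breaks several later steps: without $S_m\subseteq S_{m-1}$ you cannot conclude $x_m\in S_n$ for $n<m$, hence not $\bar x\in S_n$ (needed for (i), (ii) and the telescoping in (iii); already for $N=1$ your $S_1$ need not be contained in $S_0$, so even (i) is unproved), nor that $x\notin S_{m_0}$ propagates to $x\notin S_m$ for all $m\ge m_0$ in (iv). Moreover, near-minimizing $\psi_n$ (which contains the term $\hat\delta_n\rho(\cdot,x_n)$) over $S_n$ does not by itself yield $\rho(x_{n+1},x_n)\le\epsilon_{n+1}$: the bound requires comparing values at a point lying in the set over which the \emph{previous} iterate was a near-minimizer.

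The paper avoids all of this with one structural choice you are missing: $S_n$ is \emph{defined} as a subset of $S_{n-1}$ (the new sublevel inequality is intersected with $S_{n-1}$), and $x_n$ is chosen as a $\delta_{j_n}\epsilon_n$-minimizer over $S_{n-1}$ of the function $f+\sum_{k=0}^{j_n-1}\delta_k\rho(\cdot,x_k)$ \emph{without} the final term $\delta_{j_n}\rho(\cdot,x_n)$, where $j_n=\min\{n,N-1\}$. Nestedness then holds by construction in both cases; every $x\in S_n$ lies in $S_{n-1}$, so the near-minimality of $x_n$ gives $\rho(x,x_n)\le\epsilon_n$ uniformly on $S_n$; the diameters tend to zero; and a single Cantor-intersection argument produces $\bar x\in\bigcap_{n}S_n$, after which (i)--(iv) follow by the telescoping you describe. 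If you restore the intersection with $S_{n-1}$ in your definition and move the last $\rho$-term out of the functional being approximately minimized, your outline becomes the paper's proof.
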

\begin{proof}
(i) and (ii)
We define sequences $\{x_{i}\}$ and $\{S_{i}\}$ inductively.
Set
\begin{equation}\label{su8}
S_{0} := \{x \in X\mid f(x)+\de_0\rho(x,x_{0}) \le f(x_{0})\}.
\end{equation}
Obviously, $x_{0}\in S_{0}$.
Since the function $x\to f(x)+\de_0\rho(x,x_0)$ is \lsc, subset $S_{0}$ is closed.
For any $x \in S_{0}$, we have
\begin{equation}\label{su1}
\rho(x,x_{0}) \le\frac{f(x_{0}) - f(x)}{\de_0}\le \frac{\epsilon}{\de_0}.
\end{equation}

For $i=0,1,\ldots$, denote $j_i:=\min\{i,N-1\}$, i.e., $j_i$ is the largest integer $j\le i$ such that $\de_j>0$.
Let $i\in\N$ and
suppose $x_{0}, \ldots,x_{i-1}$ and $S_{0},\ldots,S_{i-1}$ have been defined.
We choose $x_{i} \in S_{i-1}$ such that
\begin{equation}\label{su2}
f(x_{i}) + \sum_{k=0}^{j_i-1}\delta_{k}\rho(x_{i},x_{k})
\leq \inf_{x\in S_{i-1}}\left(f(x) + \sum_{k=0}^{j_i-1}\delta_{k}\rho(x,x_{k})\right) + \delta_{j_i}\epsilon_i
\end{equation}
and define
\begin{multline}\label{su4}
S_{i} := \Biggl\{x \in S_{i-1}\mid f(x)+ \delta_{j_i}\rho(x,x_{i})\\
+ \sum_{k=0}^{j_i-1}\delta_{k} (\rho(x,x_{k})-\rho(x_{i},x_{k}))
\leq f(x_{i})\Biggr\}.
\end{multline}
Obviously, $x_{i}\in S_{i}$.
Since the function $x\to f(x)+\delta_{j_i}\rho(x,x_{i})
+ \sum_{k=0}^{j_i-1}\delta_{k} \rho(x,x_{k})$ is \lsc, subset $S_{i}$ is closed.
For any $x \in S_{i}$, we have
$$f(x) - f(x_{i})+\sum_{k=0}^{j_i-1}\delta_{k} (\rho(x,x_{k})-\rho(x_{i},x_{k}))+ \delta_{j_i}\rho(x,x_{i})\le0,$$
and consequently, making use of \eqref{su2},
\begin{multline}\label{su7}
\rho(x,x_{i})\le\frac{1}{\delta_{j_i}} \Biggl(f(x_{i}) + \sum_{k=0}^{j_i-1}\delta_{k}\rho(x_{i},x_{k})
\\
-\Bigl(f(x) + \sum_{k=0}^{j_i-1} \delta_{k}\rho(x,x_{k})\Bigr)\Biggr) \leq\epsilon_i.
\end{multline}
We can see that, for all $i\in\N$, subsets $S_{i}$ are nonempty and closed, $S_{i}\subset S_{i-1}$, and $\sup_{x\in S_i}\rho(x,x_{i}) \rightarrow 0$ as $i\to\infty$.
Since $\rho$ is a gauge-type function, we also have $\sup_{x\in S_i}d(x,x_{i}) \rightarrow 0$ and consequently,
${\rm diam} (S_{i}) \rightarrow 0.$
Since $X$ is complete, $\cap_{i=0}^{\infty}S_{i}$ contains exactly one point; let it be $\bx$.
Hence, $\rho(\bx,x_{i}) \rightarrow 0$ and $x_{i} \rightarrow \bar x$ as $i \rightarrow \infty$.
Thanks to \eqref{su1} and \eqref{su7}, $\bx$ satisfies (i) and (ii).
\medskip

Before proceeding to the proof of claim (iii), we prepare several building blocks which are going to be used when proving claims (iii) and (iv).

Let integers $m$, $n$ and $i$ satisfy $0\le m\le i\le n$.
Since $x_{i+1}\in S_i$ and $\bx\in S_n$, it follows from \eqref{su8} (when $i=0$) and \eqref{su4} that
\begin{gather}\label{su3-}
f(x_{i+1}) + \sum_{k=0}^{j_i-1}\delta_{k} (\rho(x_{i+1},x_{k})-\rho(x_{i},x_{k}))+ \delta_{j_i}\rho(x_{i+1},x_{i}) \leq f(x_{i}),
\\\label{su3-+}
f(\bx) + \sum_{k=0}^{j_n-1}\delta_{k} (\rho(\bx,x_{k})-\rho(x_{n},x_{k}))+ \delta_{j_n}\rho(\bx,x_{n}) \leq f(x_{n}).
\end{gather}
We are going to add together inequalities \eqref{su3-} from $i=m$ to $i=n-1$ and inequality \eqref{su3-+}.
Depending on the value of $N$, three cases are possible.

\underline{If $N>n$}, then $j_i=i$ and $j_n=n$.
Adding inequalities \eqref{su3-} from $i=m$ to $i=n-1$, we obtain
\begin{equation*}
f(x_{n}) + \sum_{k=0}^{n-1}\delta_{k} \rho(x_{n},x_{k}) - \sum_{k=0}^{m-1}\delta_{k} \rho(x_{m},x_{k}) \leq f(x_{m}).
\end{equation*}
Adding the last inequality and inequality \eqref{su3-+}, we arrive at
\begin{equation}\label{su3.3-}
f(\bx) + \sum_{k=0}^{n}\delta_{k} \rho(\bx,x_{k})-\sum_{k=0}^{m-1}\delta_{k} \rho(x_{m},x_{k}) \leq f(x_{m}).
\end{equation}

\underline{If $N\le m$}, then $j_i=N-1$ and $j_n=N-1$.
Adding inequalities \eqref{su3-} from $i=m$ to $i=n-1$, we obtain
\begin{equation*}
f(x_{n}) + \sum_{k=0}^{N-2}\delta_{k} (\rho(x_{n},x_{k})-\rho(x_{m},x_{k}))+ \delta_{N-1}\sum_{k=m}^{n-1}\rho(x_{k+1},x_{k}) \leq f(x_{m}).
\end{equation*}
Adding the last inequality and inequality \eqref{su3-+}, we arrive at
\begin{multline}\label{su3.4-}
f(\bx) + \sum_{k=0}^{N-2}\delta_{k} (\rho(\bx,x_{k})-\rho(x_{m},x_{k})) \\
+ \delta_{N-1}\left(\sum_{k=m}^{n-1}\rho(x_{k+1},x_{k})+ \rho(\bx,x_{n})\right) \leq f(x_{m}).
\end{multline}

\underline{If $m<N\le n$}, we add inequalities \eqref{su3-} separately from $i=m$ to $i=N-1$ and from $i=N$ to $i=n-1$ and obtain, respectively,
\begin{gather*}
f(x_{N}) + \sum_{k=0}^{N-1}\delta_{k} \rho(x_{N},x_{k})-\sum_{k=0}^{m-1}\delta_{k} \rho(x_{m},x_{k}) \leq f(x_{m}),
\\
f(x_{n}) + \sum_{k=0}^{N-2}\delta_{k} (\rho(x_{n},x_{k})-\rho(x_{N},x_{k}))+ \delta_{N-1}\sum_{k=N}^{n-1}\rho(x_{k+1},x_{k}) \leq f(x_{N}).
\end{gather*}
Adding the last two inequalities and inequality  \eqref{su3-+} together, we arrive at
\begin{multline}\label{su3.3-+}
f(\bx) + \sum_{k=0}^{N-2}\delta_{k} \rho(\bx,x_{k})-\sum_{k=0}^{m-1}\delta_{k} \rho(x_{m},x_{k})
\\
+ \delta_{N-1}\left(\sum_{k=N-1}^{n-1}\rho(x_{k+1},x_{k})+ \rho(\bx,x_{n})\right) \leq f(x_{m}).
\end{multline}

(iii) When $N=+\infty$, we set $m=0$ in the inequality \eqref{su3.3-}:
\begin{equation*}
f(\bx) + \sum_{k=0}^{n}\delta_{k}\rho(\bx,x_{k}) \leq f(x_{0}).
\end{equation*}
This inequality must hold for all $n\in\N$.
Hence, the series $\sum_{k=0}^{\infty} \delta_k\rho(\bar x,x_k)$ is convergent and condition \eqref{su11} holds true.

When $N<+\infty$, we set $m=0$ and take $n=N-1$ in the inequality \eqref{su3.3-} and any $n\ge N$ in the inequality \eqref{su3.3-+}:
\begin{gather*}
f(\bx) + \sum_{k=0}^{N-1}\delta_{k} \rho(\bx,x_{k}) \leq f(x_{0}),
\\
f(\bx) + \sum_{k=0}^{N-2}\delta_{k} \rho(\bx,x_{k})+ \delta_{N-1} \left(\sum_{i=N-1}^{n-1}\rho(x_{i+1},x_{i})+ \rho(\bx,x_{n})\right) \leq f(x_{0}).
\end{gather*}
Since $\rho(\bx,x_{n})\to0$ as $n\to\infty$, it follows from the last inequality that the series $\sum_{i=N-1}^{\infty} \rho(x_{i+1},x_{i})$ is convergent.
Combining the two inequalities produces estimate \eqref{su11-2}.

(iv) For any $x \neq \bar x,$ there exists an $m_0\in\N$ such that $x\notin S_{m}$ for all $m\ge m_0$.
By \eqref{su4}, this means that
\begin{equation}\label{su5}
f(x) + \sum_{k=0}^{j_m-1}\delta_{k} (\rho(x,x_{k})-\rho(x_{m},x_{k}))+ \delta_{j_m}\rho(x,x_{m}) > f(x_{m}).
\end{equation}
Depending on the value of $N$, we consider two cases.

\underline{If $N=+\infty$}, then $j_m=m$.
Since the
series $\sum_{k=0}^{\infty} \delta_{k}\rho(\bar x,x_{k})$ is convergent, we can pass in \eqref{su3.3-} to the limit as $n\to\infty$ to obtain
\begin{equation*}
f(\bar x) + \sum_{k=0}^{\infty} \delta_{k}\rho(\bar x,x_{k}) \leq f(x_{m})+ \sum_{k=0}^{m-1}\delta_{k}\rho(x_{m},x_{k}).
\end{equation*}
Subtracting the last inequality from \eqref{su5}, we arrive at \begin{equation*}
f(x) + \sum\limits_{k=0}^{m}\delta_{k}\rho(x,x_{k}) > f(\bar x)+ \sum\limits_{k=0}^{\infty}\delta_{k}\rho(\bar x,x_{k}).
\end{equation*}
Condition (\ref{su10}) follows immediately.

\underline{If $N<\infty$}, we can take $m_0\ge N$.
Then $j_m=N-1$ and it follows from \eqref{su3.4-} that
\begin{multline*}
f(\bx) + \sum_{k=0}^{N-2}\delta_{k} (\rho(\bx,x_{k})-\rho(x_{m},x_{k})) \\
+ \delta_{N-1}\sup_{n\ge m} \left(\sum_{k=m}^{n-1}\rho(x_{k+1},x_{k})+ \rho(\bx,x_{n})\right) \leq f(x_{m}).
\end{multline*}
Subtracting the last inequality from \eqref{su5}, we arrive at \eqref{us10}.
\end{proof}

\section{Comments and Corollaries}\label{S3}
In this section, we discuss the main result proved in Section~\ref{S2} and formulate a series of remarks and several corollaries.

\begin{rem}\label{R18}
1. The series $\sum_{i=0}^{\infty} \delta_{i}\rho(x,x_{i})$ in \eqref{su10} does not have to be convergent for all $x \in X\setminus\{\bar x\}$.

2. If $N<\infty$, in the proof of part (iv) of Theorem~\ref{main} one can also consider the case $m_0<N$.
Then, for $m_0\le m<N$, one has $j_m=m$ and it follows from \eqref{su3.3-+} that
\begin{multline*}
f(\bx) + \sum_{k=0}^{N-2}\delta_{k} \rho(\bx,x_{k})-\sum_{k=0}^{m-1}\delta_{k} \rho(x_{m},x_{k})
\\
+ \delta_{N-1}\sup_{n\ge N} \left(\sum_{k=N-1}^{n-1}\rho(x_{k+1},x_{k})+ \rho(\bx,x_{n})\right) \leq f(x_{m}).
\end{multline*}
Subtracting the last inequality from \eqref{su5}, one arrives at \begin{multline}\label{su10+}
f(x) + \sum_{i=0}^{m}\delta_{i} \rho(x,x_{i}) >
f(\bx)
\\
+ \sum_{i=0}^{N-2}\delta_{i} \rho(\bx,x_{i})+ \delta_{N-1}\sup_{n\ge N} \left(\sum_{k=N-1}^{n-1}\rho(x_{k+1},x_{k})+ \rho(\bx,x_{n})\right).
\end{multline}
This estimate compliments \eqref{us10}.

3. Instead of $\epsilon$-minimality in the sense of \eqref{00}, it is sufficient to assume in Theorem~\ref{main} a weaker form of $\epsilon$-minimality: $f(x)\ge f(x_{0})-\epsilon$ for all $x\in X$ such that $f(x)+\de_0\rho(x,x_{0})>f(x_{0})$.

4. Looking at the statement of Theorem~\ref{main}, it is easy to notice that considering a gauge-type function $\rho$ and a sequence of positive numbers $\{\delta_{i}\}_{i=0}^\infty$ can be replaced by that of a sequence of gauge-type functions $\{\rho_i\}_{i=0}^\infty$ such that, for $i=1,2,\ldots$, function $\rho_i$ is a multiple of $\rho_0$.
The latter assumption can be relaxed or dropped at the expense of weakening or dropping the estimates in part (ii) of the concluding part of Theorem~\ref{main}.

Moreover, one can modify the proof employing in it a sequence of functions $\{\rho_i\}_{i=0}^\infty$ which do not have to possess the second property in Definition~\ref{D2}, as long as they ensure that the resulting sets $S_i$ (cf. \eqref{su4}) are closed and form a decreasing sequence with their diameters going to zero.
This way one can establish additional properties of the sequence $\{x_{i}\}_{i=1}^\infty$ and its limiting point $\bx$.
An interesting example of such a sequence in a Banach space setting was considered by Loewen and Wang \cite{LoeWan01} who proved a strong variant of the Borwein--Preiss variational principle (with $\bx$ being a strong minimizer of the corresponding perturbed function; cf. \cite[Definition~2.1]{LoeWan01}).

5. Setting $\epsilon_i:=\epsilon/(2^i\de_0)$ $(i=1,2,\ldots)$, one can make the estimates in (ii) look as in \cite[Theorem~1]{LiShi00} and \cite[Theorem~2.5.2]{BorZhu05}.

6. Given a positive number $\la$, we can rewrite the conclusion of Theorem~\ref{main} in a more conventional form with $\de_0=1$, $\ds\rho(\bx,x_0)\le \la$ instead of (i) and conditions \eqref{su11} and \eqref{su10} replaced, respectively, with the following ones:
\begin{gather}\tag{\ref{su11}$^\prime$}
f(\bar x)+\frac{\epsilon}{\la}\sum\limits_{i=0}^{\infty} \delta_{i}\rho(\bar x,x_{i}) \le f(x_0),
\\\tag{\ref{su10}$^\prime$}
f(x)+ \frac{\epsilon}{\la} \sum\limits_{i=0}^{\infty}\delta_{i}\rho(x,x_{i}) > f(\bar x)+ \frac{\epsilon}{\la} \sum\limits_{i=0}^{\infty}\delta_{i}\rho(\bar x,x_{i})
\quad\mbox{for all}\quad
x \in X\setminus\{\bar x\}
\end{gather}
and similar amendments in conditions \eqref{su11-2}, \eqref{us10} and \eqref{su10+}.
\end{rem}

The next corollary gives some direct consequences of conditions \eqref{su11-2} and \eqref{us10} in Theorem~\ref{main}.

\begin{cor}\label{C4}
Suppose all the assumptions of Theorem~\ref{main} are satisfied, and $N<\infty$. Then
\begin{gather}\label{C4-11}
f(\bx) + \sum_{i=0}^{N-1}\delta_{i} \rho(\bx,x_{i})
\le f(x_{0}),
\\\label{C4-12}
f(\bx) + \sum_{i=0}^{N-2}\delta_{i} \rho(\bx,x_{i})
+ \delta_{N-1}\sum_{i=N-1}^{\infty} \rho(x_{i+1},x_{i})\le f(x_{0}),
\end{gather}
and, for any $x \in X\setminus\{\bar x\}$, there exists an $m_0\ge N$ such that, for all $m\ge m_0$,
\begin{align}\notag
f(x) + \sum_{i=0}^{N-2}\delta_{i}\rho(x,x_{i}) &+ \delta_{N-1}\rho(x,x_{m})
\\\label{C4-21}
&> f(\bx)
+ \sum_{i=0}^{N-2}\delta_{i} \rho(\bx,x_{i})
+\delta_{N-1} \rho(\bx,x_{m}),
\\\notag
f(x) + \sum_{i=0}^{N-2}\delta_{i}\rho(x,x_{i}) &+ \delta_{N-1}\rho(x,x_{m})
\\\label{C4-22}
&> f(\bx)
+ \sum_{i=0}^{N-2}\delta_{i} \rho(\bx,x_{i})
+\delta_{N-1}\sum_{i=m}^{\infty}\rho(x_{i+1},x_{i}), \end{align}
and consequently,
\begin{align}\notag
f(x) + \sum_{i=0}^{N-2}\delta_{i}\rho(x,x_{i}) &+ \delta_{N-1}\rho(x,\bx)
\\\label{ss10-2}
&\ge f(\bx) + \sum_{i=0}^{N-2}\delta_{i} \rho(\bx,x_{i})
\quad\mbox{for all}\quad
x \in X,
\end{align}
where $\bx$ and $\{x_{i}\}_{i=1}^\infty$ are a point and a sequence guaranteed by Theorem~\ref{main}.
\end{cor}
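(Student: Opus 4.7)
The plan is to derive the five displayed inequalities directly from the two pieces of information supplied by Theorem~\ref{main} in the case $N<\infty$, namely \eqref{su11-2} and \eqref{us10}. In each case, the key operation is to estimate the supremum on the right-hand side of the respective inequality by a conveniently chosen value of $n$ (either $n=N-1$, $n=m$, or $n\to\infty$). The remaining inequality \eqref{ss10-2} is then obtained by a single limit passage $m\to\infty$ in \eqref{C4-21}.

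For \eqref{C4-11} I would substitute $n=N-1$ inside the supremum in \eqref{su11-2}. With this choice, the inner sum $\sum_{i=N-1}^{n-1}\rho(x_{i+1},x_i)$ is empty (using the convention $\sum_{k=0}^{-1}=0$ fixed at the beginning of Section~\ref{S2}), so the supremum is bounded below by $\rho(\bx,x_{N-1})$; merging this term with $\sum_{i=0}^{N-2}\delta_i\rho(\bx,x_i)$ gives $\sum_{i=0}^{N-1}\delta_i\rho(\bx,x_i)$, which is \eqref{C4-11}. For \eqref{C4-12} I would instead pass $n\to\infty$ in the same supremum: by part (iii) of Theorem~\ref{main} the series $\sum_{i=N-1}^{\infty}\rho(x_{i+1},x_i)$ is convergent, and by part (ii) together with $\epsilon_n\downarrow0$ we have $\rho(\bx,x_n)\to0$, so the supremum equals $\sum_{i=N-1}^{\infty}\rho(x_{i+1},x_i)$ and \eqref{C4-12} follows.

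Inequalities \eqref{C4-21} and \eqref{C4-22} are obtained from \eqref{us10} by exactly the same two manipulations of the supremum: choosing $n=m$ yields \eqref{C4-21}, and letting $n\to\infty$ yields \eqref{C4-22}. For \eqref{ss10-2}, the case $x=\bx$ is immediate since $\rho(\bx,\bx)=0$ makes the two sides equal; for $x\ne\bx$ I would take $m\to\infty$ in \eqref{C4-21}, using continuity of $\rho$ (built into Definition~\ref{D2}) together with $x_m\to\bx$ to conclude that $\rho(x,x_m)\to\rho(x,\bx)$ and $\rho(\bx,x_m)\to0$. The strict inequality in \eqref{C4-21} then degenerates to the nonstrict inequality asserted in \eqref{ss10-2}. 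The only step that requires any real care is this final limit passage, and even there continuity of the gauge-type function $\rho$ makes it entirely routine; there is no genuine obstacle in the proof.
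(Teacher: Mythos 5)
Your proposal is correct and follows essentially the same route as the paper: \eqref{C4-11} and \eqref{C4-21} come from taking $n=N-1$ and $n=m$ under the suprema in \eqref{su11-2} and \eqref{us10}, \eqref{C4-12} and \eqref{C4-22} from letting $n\to\infty$ there, and \eqref{ss10-2} from the limit $m\to\infty$ in \eqref{C4-21} using continuity of $\rho$. The only quibble is that the supremum need not \emph{equal} the limiting value $\sum_{i=N-1}^{\infty}\rho(x_{i+1},x_i)$ (it is merely bounded below by it), but that lower bound is all your argument actually uses, so the proof stands.
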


\begin{proof}
Conditions \eqref{C4-11} and \eqref{C4-12} correspond, respectively, to setting $n=N-1$ and letting $n\to\infty$ under the $\sup$ in condition \eqref{su11-2}.
Similarly, conditions \eqref{C4-21} and \eqref{C4-22} correspond, respectively, to setting $n=m$ and letting $n\to\infty$ under the $\sup$ in condition \eqref{us10}.
Condition \eqref{ss10-2} is obviously true when $x=\bx$.
When $x\ne\bx$, it results from passing to the limit as $m\to\infty$ in any of the conditions \eqref{C4-21} and \eqref{C4-22} thanks to the continuity of $\rho$.
\end{proof}

\begin{rem}
1. Conditions \eqref{C4-11} and \eqref{C4-12} are in general independent.
Conditions \eqref{C4-21} and \eqref{C4-22} are independent too.
Conditions \eqref{C4-11} and \eqref{C4-21} were formulated in \cite{LiShi00}.
Thanks to Corollary~\ref{C4}, Theorem~\ref{main} strengthens \cite[Theorem~1]{LiShi00}.

2. In accordance with Theorem~\ref{main} and Corollary~\ref{C4}, $\bx$ is a point of minimum of the sum $f+g$, where the perturbation function $g$ is defined for $x\in X$ either as
$g(x):=\sum_{i=0}^{\infty}\delta_{i}\rho(x,x_{i})$ if $N=+\infty$ or as $g(x):=\sum_{i=0}^{N-2}\delta_{i}\rho(x,x_{i})+ \delta_{N-1}\rho(x,\bx)$ otherwise.
When $N=+\infty$, the minimum is strict.
Thanks to the next proposition, if function $\rho$ possesses the triangle inequality, the minimum is strict also when $N<+\infty$.
\end{rem}

Recall that a function $\rho:X\times X\to\R$ possesses the \emph{triangle inequality} if $\rho(x_1,x_3)\le \rho(x_1,x_2)+\rho(x_2,x_3)$ for all $x_1,x_2,x_3\in X$.

%Observe that in this case conditions \eqref{C4-1} and \eqref{C4-2} can be rewritten equivalently with the maximums in their \RHS s replaced by simply $\sum_{i=N-1}^{\infty} \rho(x_{i+1},x_{i})$ and $\sum_{i=m}^{\infty} \rho(x_{i+1},x_{i})$, respectively.

\begin{prop}\label{C6}
Along with conditions \eqref{C4-11}--\eqref{C4-21}, consider the following one:
\begin{align}\label{C6-4}
f(x) + \sum_{i=0}^{N-2}\delta_{i}\rho(x,x_{i}) &+ \delta_{N-1}\rho(x,\bx)
> f(\bx)
+ \sum_{i=0}^{N-2}\delta_{i} \rho(\bx,x_{i}).
\end{align}
If function $\rho$ possesses the triangle inequality,
then \eqref{C4-12} $\Rightarrow$ \eqref{C4-11} and \eqref{C4-22} $\Rightarrow$ \eqref{C4-21} $\Rightarrow$ \eqref{C6-4}.
\end{prop}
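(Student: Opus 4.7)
The plan is to prove all three implications by straightforward applications of the triangle inequality together with the fact, guaranteed by part~(ii) of Theorem~\ref{main}, that $\rho(\bx,x_n)\to0$ as $n\to\infty$. Since $\delta_{N-1}>0$ by the standing assumption $N-1<N$, multiplication by $\delta_{N-1}$ preserves inequalities and can be absorbed at the end.

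For \eqref{C4-12}~$\Rightarrow$~\eqref{C4-11}, I would iterate the triangle inequality to obtain, for every $n\ge N-1$,
\begin{equation*}
\rho(\bx,x_{N-1})\le\rho(\bx,x_n)+\sum_{i=N-1}^{n-1}\rho(x_{i+1},x_i),
\end{equation*}
and then let $n\to\infty$; since the \RHS\ of \eqref{C4-12} converges, the series on the right is finite, and $\rho(\bx,x_n)\to0$, so $\rho(\bx,x_{N-1})\le\sum_{i=N-1}^{\infty}\rho(x_{i+1},x_i)$. Multiplying by $\delta_{N-1}$ and adding $f(\bx)+\sum_{i=0}^{N-2}\delta_i\rho(\bx,x_i)$ to both sides turns \eqref{C4-12} into \eqref{C4-11}. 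The implication \eqref{C4-22}~$\Rightarrow$~\eqref{C4-21} is completely analogous: replace the index $N-1$ by $m$ in the above telescoping estimate to obtain $\rho(\bx,x_m)\le\sum_{i=m}^{\infty}\rho(x_{i+1},x_i)$, then substitute into \eqref{C4-22}.

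For \eqref{C4-21}~$\Rightarrow$~\eqref{C6-4}, no limiting argument is needed. Fix $x\ne\bx$, apply \eqref{C4-21} at any $m\ge m_0$, and use the triangle inequality in the form $\rho(x,x_m)\le\rho(x,\bx)+\rho(\bx,x_m)$, i.e., $\rho(x,x_m)-\rho(\bx,x_m)\le\rho(x,\bx)$. Multiplying by $\delta_{N-1}>0$ and adding $f(x)+\sum_{i=0}^{N-2}\delta_i\rho(x,x_i)-f(\bx)-\sum_{i=0}^{N-2}\delta_i\rho(\bx,x_i)$ to both sides gives
\begin{equation*}
f(x)+\sum_{i=0}^{N-2}\delta_i\rho(x,x_i)+\delta_{N-1}\rho(x,\bx)\ge f(x)+\sum_{i=0}^{N-2}\delta_i\rho(x,x_i)+\delta_{N-1}[\rho(x,x_m)-\rho(\bx,x_m)],
\end{equation*}
and the \RHS\ is strictly greater than $f(\bx)+\sum_{i=0}^{N-2}\delta_i\rho(\bx,x_i)$ by \eqref{C4-21}. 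This yields \eqref{C6-4}.

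There is no real obstacle here: the only subtle point is preservation of strictness, and the argument for \eqref{C4-21}~$\Rightarrow$~\eqref{C6-4} sidesteps this entirely by using the triangle inequality pointwise in $m$ instead of passing to the limit (a limiting argument would only give $\ge$, as in \eqref{ss10-2}). The other two implications are inequalities of non-strict type, so passing to the limit in $n$ causes no trouble.
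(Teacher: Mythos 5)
Your proposal is correct and follows essentially the same route as the paper: the telescoped triangle inequality $\rho(\bx,x_m)\le\rho(\bx,x_n)+\sum_{i=m}^{n-1}\rho(x_{i+1},x_i)$ passed to the limit in $n$ gives the first two implications, and the pointwise estimate $\rho(x,x_m)\le\rho(x,\bx)+\rho(\bx,x_m)$ gives \eqref{C4-21} $\Rightarrow$ \eqref{C6-4} with strictness preserved. No gaps.
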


\begin{proof}
For any $m,n\in\N$ with $m<n$, we have
$$
\rho(\bx,x_{m})\le\rho(\bx,x_{n})+ \sum_{i=m}^{n-1} \rho(x_{i+1},x_{i}),
$$
and consequently, passing to the limit as $n\to\infty$,
\begin{align*}
\rho(\bx,x_{m})\le\sum_{i=m}^{\infty} \rho(x_{i+1},x_{i}).
\end{align*}
Hence,  \eqref{C4-12} $\Rightarrow$ \eqref{C4-11} and \eqref{C4-22} $\Rightarrow$ \eqref{C4-21}.
Condition \eqref{C6-4} follows from \eqref{C4-21} thanks to the inequality $\rho(x,x_{m})\le\rho(x,\bx)+\rho(\bx,x_{m})$.
\end{proof}

\begin{cor}\label{C6-0}
Suppose all the assumptions of Theorem~\ref{main} are satisfied, $N<+\infty$, and function $\rho$ possesses the triangle inequality.
Then condition \eqref{C6-4} holds true for all $x\in X\setminus\{\bx\}$.
\end{cor}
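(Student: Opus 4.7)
The plan is to combine Corollary~\ref{C4} with the implication \eqref{C4-21} $\Rightarrow$ \eqref{C6-4} already recorded in Proposition~\ref{C6}. Fix an arbitrary $x \in X\setminus\{\bx\}$. Because all the hypotheses of Theorem~\ref{main} hold and $N<+\infty$, Corollary~\ref{C4} supplies an integer $m_0 \ge N$ such that \eqref{C4-21} is valid for every $m \ge m_0$. In particular, pick any fixed $m \ge m_0$ (say $m = m_0$); the value of $m$ will drop out at the end and only its existence matters.

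Next, I would invoke Proposition~\ref{C6}, which asserts under the standing triangle-inequality assumption on $\rho$ that \eqref{C4-21} implies \eqref{C6-4}. For the reader's convenience I would briefly reproduce the one-line argument: the triangle inequality gives $\rho(x,x_m) \le \rho(x,\bx) + \rho(\bx,x_m)$, so replacing $\delta_{N-1}\rho(x,x_m)$ on the \LHS\ of \eqref{C4-21} by the larger quantity $\delta_{N-1}\rho(x,\bx) + \delta_{N-1}\rho(\bx,x_m)$ preserves the strict inequality, and then cancelling the common term $\delta_{N-1}\rho(\bx,x_m)$ from both sides yields exactly \eqref{C6-4}. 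Since $x$ was an arbitrary element of $X\setminus\{\bx\}$, this completes the proof.

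There is no real obstacle here: the content is packaged almost entirely by the two earlier results. The one point worth flagging is that $\rho$ is not assumed symmetric, so the triangle inequality must be applied in the specific ordering $\rho(x,x_m) \le \rho(x,\bx)+\rho(\bx,x_m)$; this is indeed the form that arises after one chooses to estimate the term $\delta_{N-1}\rho(x,x_m)$ appearing on the \LHS\ of \eqref{C4-21}, so the argument goes through without any adjustment. No use is made of \eqref{C4-22}, which would offer a second (equivalent) route via $\rho(\bx,x_m) \le \sum_{i=m}^\infty \rho(x_{i+1},x_i)$ but is not needed for this corollary.
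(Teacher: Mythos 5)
Your proposal is correct and follows exactly the paper's route: Corollary~\ref{C4} supplies \eqref{C4-21} for some $m\ge m_0$, and the triangle inequality $\rho(x,x_m)\le\rho(x,\bx)+\rho(\bx,x_m)$ (the implication recorded in Proposition~\ref{C6}) then yields \eqref{C6-4} after cancelling $\delta_{N-1}\rho(\bx,x_m)$, which is finite by part (ii) of Theorem~\ref{main}. The paper's proof is just the one-line citation of these two results, so you have simply spelled out the same argument in detail.
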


\begin{proof}
The statement is a consequence of Corollary~\ref{C4} thanks to Proposition~\ref{C6}.
\end{proof}

The next two statements are consequences of Theorem~\ref{main} when $N=+\infty$ and $N=1$, respectively, and $\rho$ is of a special form.
The first one corresponds to the case $N=+\infty$, $X$ a Banach space and $\rho(x_1,x_2):= \|x_1-x_2\|^p$ where $p>0$.

\begin{cor}\label{T4}
Let $(X,\|\cdot\|)$ be a Banach space and function $f : X \rightarrow \mathbb{R}\cup\{+\infty\}$ be \lsc.
Suppose that $\la$, $p$, $\epsilon$, $\epsilon_{i}$ $(i=1,2,\ldots)$, $\delta_{i}$ $(i=0,1,\ldots)$ are positive numbers and $\epsilon_{i}\downarrow0$ as $i\to\infty$.
If $x_{0} \in X$ and $\delta_{0}$ satisfy conditions \eqref{00} and \eqref{ies2},
then there exist a point $\bar x\in X$ and a sequence $\{x_{i}\}_{i=1}^\infty\subset X$ such that $x_i\to\bx$ as $i\to\infty$ and
\begin{enumerate}
\item
$\|\bar x-x_{0}\|\le\la$;
\item
$\|\bar x-x_{i}\|\le\epsilon_i$ $(i=1,2,\ldots)$;
\item
$\ds f(\bar x) +\frac{\epsilon}{\la^p}\sum\limits_{i=0}^{\infty} \delta_{i}\|\bar x-x_{i}\|^p \le f(x_0)$;
\item
$\ds f(x) +\frac{\epsilon}{\la^p}\sum\limits_{i=0}^{\infty}\delta_{i} \|x-x_{i}\|^p
> f(\bar x) + \frac{\epsilon}{\la^p} \sum\limits_{i=0}^{\infty}\delta_{i}\|\bar x-x_{i}\|^p$
for all
$x \in X\setminus\{\bar x\}$.
\end{enumerate}
\end{cor}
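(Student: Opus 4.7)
The strategy is to derive Corollary~\ref{T4} as a specialization of Theorem~\ref{main} by making the ``right'' choice of a gauge-type function. Concretely, I would set $\rho(x,y):=(\epsilon/\la^p)\|x-y\|^p$ for $x,y\in X$. Continuity is immediate from $p>0$ together with continuity of the norm, $\rho(x,x)=0$ is obvious, and given any $\eta>0$ the implication $\rho(y,z)\le(\epsilon/\la^p)(\eta/2)^p\Rightarrow\|y-z\|\le\eta/2<\eta$ shows that the second condition of Definition~\ref{D2} is met; so $\rho$ is a gauge-type function.

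Next I would rescale the auxiliary sequence by putting $\tilde\epsilon_i:=(\epsilon/\la^p)\epsilon_i^p$ for $i\in\N$, which is positive and decreases to zero. Since every $\de_i$ is assumed to be positive, we are in the case $N=+\infty$ of Theorem~\ref{main}. Applying that theorem with $\rho$, $\{\tilde\epsilon_i\}$ and the given $\{\de_i\}$ produces a point $\bx$ and a sequence $\{x_i\}$ converging to $\bx$. Substituting $\rho(x,y)=(\epsilon/\la^p)\|x-y\|^p$ into parts (ii), (iii) and (iv) of Theorem~\ref{main}, the bound $\rho(\bx,x_i)\le\tilde\epsilon_i$ collapses to $\|\bx-x_i\|\le\epsilon_i$, inequality \eqref{su11} becomes condition (iii) of the corollary, and the strict inequality \eqref{su10} becomes condition (iv); everything here is pure substitution.

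The one step that does not fall out automatically is condition (i). Theorem~\ref{main}(i) only delivers $\rho(\bx,x_0)\le\epsilon/\de_0$, which translates into $\|\bx-x_0\|\le\la\de_0^{-1/p}$, and this exceeds $\la$ whenever $\de_0<1$ -- a case explicitly allowed by \eqref{ies2}. To close this gap I would revisit the construction inside the proof of Theorem~\ref{main}: since $\bx\in S_0$,
\[
\de_0\rho(\bx,x_0)\le f(x_0)-f(\bx)\le f(x_0)-\inf_X f,
\]
and hypothesis \eqref{ies2} now forces the right-hand side to be at most $\de_0\epsilon$, giving the sharper estimate $\rho(\bx,x_0)\le\epsilon$ and hence $\|\bx-x_0\|\le\la$. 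This refinement of conclusion (i), driven precisely by \eqref{ies2}, is the only genuinely nontrivial step; the remainder of the argument is bookkeeping.
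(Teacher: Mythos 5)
Your proof is correct, and it follows the same basic route as the paper: specialize Theorem~\ref{main} with a gauge-type function built from $\|\cdot\|^p$ and translate the conclusions. The difference lies in where the scaling constants are placed, and it matters for conclusion (i). The paper sets $\rho(x_1,x_2):=\|x_1-x_2\|^p$ and rescales the \emph{data} instead: $\epsilon':=\epsilon\de_0$, $\de_i':=(\epsilon/\la^p)\de_i$, $\epsilon_i':=\epsilon_i^p$. Condition \eqref{ies2} is exactly what makes $f(x_0)\le\inf_X f+\epsilon'$, and the normalization is chosen so that $\epsilon'/\de_0'=\la^p$; hence conclusion (i) of Theorem~\ref{main} yields $\|\bx-x_0\|\le\la$ directly and the corollary becomes a pure black-box application. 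Your choice $\rho(x,y):=(\epsilon/\la^p)\|x-y\|^p$ with the original $\epsilon$ and $\de_i$ leaves (i) of the theorem too weak ($\|\bx-x_0\|\le\la\de_0^{-1/p}$), which you correctly diagnose and repair via the membership $\bx\in S_0$ together with \eqref{ies2}. That repair is sound, but note you do not actually need to re-enter the proof of Theorem~\ref{main}: the $i=0$ term of conclusion \eqref{su11} already gives $\de_0\rho(\bx,x_0)\le f(x_0)-f(\bx)\le f(x_0)-\inf_X f\le\de_0\epsilon$, so the sharper bound $\rho(\bx,x_0)\le\epsilon$ follows from the \emph{statement} of the theorem alone. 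Either way your argument is complete; the paper's rescaling simply buys a cleaner derivation in which (i) requires no separate treatment.
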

\begin{proof}
Set $\rho(x_1,x_2):= \|x_1-x_2\|^p$, $x_1,x_2\in X$.
It is easy to check that $\rho$ is a gauge-type function.
Set $\epsilon':=\epsilon\de_0$, $\epsilon_i':=\epsilon_i^{p}$ $(i=1,2,\ldots)$, $\de_i':=(\epsilon/\la^p)\de_i$ $(i=0,1,\ldots)$.
Then $f(x_{0})\le\inf_{X}f+\epsilon'$, $\epsilon_i'\downarrow0$ as $i\to\infty$ and $\epsilon'/\de_0'=\la^p$.
The conclusion follows from Theorem~\ref{main} with $\epsilon'$, $\epsilon_i'$ and $\de_i'$ in place of $\epsilon$, $\epsilon_i$ and $\de_i$, respectively.
\end{proof}
Condition (iv) means that $\bx$ is a point of strict minimum of the function $x\mapsto f(x)+(\epsilon/\la^p)g(x)$, where $g(x):=\sum_{i=0}^{\infty}\delta_{i} \|x-x_{i}\|^p$.
If $X$ is Fr\'echet smooth, $p>1$, and $\sum_{i=0}^{\infty}\delta_{i}<\infty$, then $g$ is defined on the whole of $X$ and is everywhere Fr\'echet differentiable, i.e., we have an example of a smooth variational principle of Borwein--Preiss type.

\begin{rem}\label{R11}
1. Apart from \eqref{ies2}, no other restrictions are imposed on the positive numbers $\delta_{i}$, $i=0,1,\ldots$

2. Condition \eqref{00} does not exclude the equality case: $f(x_{0})=\inf_{X}f+\epsilon$.
In the latter case, condition \eqref{ies2} is equivalent to $\de_0\ge1$.
This still allows one to chose positive numbers $\delta_{i}$, $i=1,2,\ldots$, such that $\sum_{i=0}^{\infty}\delta_{i}<\infty$ if necessary.

When the inequality \eqref{00} is strict, then one can choose $\de_0<1$ and positive numbers $\delta_{i}$, $i=1,2,\ldots$, such that $\sum_{i=0}^{\infty}\delta_{i}=1$.
\end{rem}

The next statement is the Ekeland variational principle.
It corresponds to $N=1$ and $\rho$ being a distance function.

\begin{cor}\label{T3}
Let $(X,d)$ be a complete metric space and function $f : X \rightarrow \mathbb{R}\cup\{+\infty\}$ be \lsc.
Suppose $\la>0$ and $\epsilon>0$.
If $x_{0} \in X$ satisfies \eqref{00}, then
there exists a point $\bar x\in X$ such that
\begin{enumerate}
\item
$d(\bx,x_{0})\le \la$;
\item
$\ds f(\bx) +\frac{\epsilon}{\la}d(\bx,x_{0}) \le f(x_{0});$
\item
$\ds f(x) + \frac{\epsilon}{\la}d(x,\bx) > f(\bx)$
for all $x \in X\setminus\{\bar x\}$.
\end{enumerate}
\end{cor}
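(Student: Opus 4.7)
The plan is to apply Theorem~\ref{main} directly, specializing the gauge-type function and the $\delta$-sequence so that the $N=1$ case collapses onto the Ekeland statement. First I would set $\rho(x_1,x_2):=d(x_1,x_2)$ and verify that this is a gauge-type function in the sense of Definition~\ref{D2}: the identity $d(x,x)=0$ is immediate, and the second axiom is trivial since one may take $\delta:=\epsilon$ in the definition. Moreover, $\rho=d$ obviously satisfies the triangle inequality, which will be essential for producing the \emph{strict} inequality in conclusion (iii).

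Next I would choose $\delta_0:=\epsilon/\la$ and $\delta_i:=0$ for all $i\ge 1$, so that $N=1$ in the notation of Theorem~\ref{main}, and pick any auxiliary sequence $\epsilon_i\downarrow 0$ (its specific values play no role in the present corollary since condition (ii) of Theorem~\ref{main} concerns only the sequence $\{x_i\}_{i=1}^\infty$, which is not part of the Ekeland conclusion). With the hypothesis \eqref{00} on $x_0$ in place, Theorem~\ref{main} produces a point $\bar x$ whose conclusion (i) reads $\rho(\bar x,x_0)\le \epsilon/\delta_0=\la$; this is precisely claim (i) of the corollary.

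For claim (ii), I would invoke Corollary~\ref{C4}. Specifically, with $N=1$, condition \eqref{C4-11} reduces to
\[
f(\bar x)+\delta_0\,\rho(\bar x,x_0)\le f(x_0),
\]
which, upon substituting $\delta_0=\epsilon/\la$ and $\rho=d$, yields $f(\bar x)+(\epsilon/\la)\,d(\bar x,x_0)\le f(x_0)$. For claim (iii), because $\rho=d$ satisfies the triangle inequality, Corollary~\ref{C6-0} applies and delivers condition \eqref{C6-4}; with $N=1$ the two sums $\sum_{i=0}^{N-2}$ are empty (by the summation convention $\sum_{k=0}^{-1}a_k=0$ used throughout Section~\ref{S2}), so \eqref{C6-4} collapses to
\[
f(x)+\delta_0\,\rho(x,\bar x)>f(\bar x)\qquad\text{for all }x\in X\setminus\{\bar x\},
\]
which is exactly the required inequality $f(x)+(\epsilon/\la)\,d(x,\bar x)>f(\bar x)$.

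There is no real obstacle here: the entire argument is a bookkeeping exercise of matching the scaling $\delta_0=\epsilon/\la$ to the desired $\la$-bound in (i), and of reading off the $N=1$ specializations of \eqref{C4-11} and \eqref{C6-4}. The only point worth flagging is that the strict inequality in (iii) genuinely requires Corollary~\ref{C6-0} (and hence the triangle inequality for $\rho$), rather than merely condition \eqref{C4-21} of Corollary~\ref{C4}, which would leave the bound on $d(x,\bar x)$ implicit through the auxiliary sequence $\{x_m\}$.
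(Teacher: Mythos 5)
Your proposal is correct and follows essentially the same route as the paper's own proof: set $\rho:=d$, $N=1$, $\delta_0:=\epsilon/\lambda$, $\delta_i:=0$ for $i\ge1$, and read off the conclusions from Theorem~\ref{main} (for (i) and, via \eqref{C4-11}, for (ii)) together with Corollary~\ref{C6-0} (for the strict inequality in (iii)). The paper's proof is just a terser version of exactly this bookkeeping, including your observation that the triangle inequality for $d$ is what licenses the appeal to Corollary~\ref{C6-0}.
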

\begin{proof}
Set $\rho:=d$, $N=1$,
$\de_0:=\epsilon/\la$,
$\epsilon_i:=\epsilon/2^{i}$ and $\de_i:=0$ $(i=1,2,\ldots)$.
Then $\epsilon_i\downarrow0$ as $i\to\infty$ and $\epsilon/\de_0=\la$.
The conclusion follows from Theorem~\ref{main} and Corollary~\ref{C6-0}.
\end{proof}

\section{``Smooth'' Regularity Theory}\label{S4}
One can try to use the estimates in Theorem~\ref{main} for developing a ``smooth'' regularity theory similar to the conventional theory based on the application of the Ekeland variational principle (cf. \cite{Aze06,Iof00_,DonRoc14}) and usually using certain \emph{slopes} to formulate primal space criteria (cf. \cite{Aze06,Iof00_,Kru15,Kru15.2}).
The first step towards the development of such a theory would be defining appropriate ``smooth'' slopes.

To illustrate the idea, we consider briefly the case $N=+\infty$.
Let a function $f : X \rightarrow \mathbb{R}\cup\{+\infty\}$, a gauge-type function $\rho : X\times X\rightarrow [0,\infty]$ and a sequence $\{\delta_{i}\}_{i=0}^\infty\subset\R_+\setminus\{0\}$ with $\de_0=1$ be given.

For a sequence $\{x_{i}\}_{i=0}^\infty\subset X$ , define
$$g_{\{x_{i}\}}(u):=\sum_{i=0}^{\infty} \delta_{i}\rho(u,x_{i}),
\quad u\in X.$$
Next, for an $x\in X$ with $f(x)<\infty$ and a sequence $\{x_{i}\}_{i=0}^\infty\subset X$ convergent to $x$ with $g_{\{x_{i}\}}(x)<\infty$, the slope of $f$ at $(x,\{x_{i}\})$ can be defined as follows:
\begin{equation}\label{li}
|\nabla{f}|(x,\{x_{i}\}):= \limsup_{\substack{u\to{x}\\g_{\{x_{i}\}}(u)\ne g_{\{x_{i}\}}(x)}}\;
\frac{[f(x)-f(u)]_+}{g_{\{x_{i}\}}(u)-g_{\{x_{i}\}}(x)}.
\end{equation}
Similarly to the conventional slope, this quantity characterizes the maximal `rate of descent' of $f$ at $x$ (with respect to $g_{\{x_{i}\}}$).

Theorem~\ref{main} implies the existence of a point $\bar x\in X$ near the given point $x_0$ and a sequence $\{x_{i}\}_{i=1}^\infty\subset X$ convergent to $\bx$ such that $|\nabla{f}|(\bx,\{x_{i}\})$ is small.
Moreover, it provides quantitative estimates for $|\nabla{f}|(\bx,\{x_{i}\})$ and the `distance' (in terms of $\rho$) from $\bx$ to $x_0$.
More specifically, in the framework of Remark~\ref{R18}.6, one has $\ds\rho(\bx,x_0)\le \la$ and $|\nabla{f}|(\bx,\{x_{i}\})\le\epsilon/\la$.

Furthermore, since (\ref{su10}) (and (\ref{su10}$^\prime$)) is a global condition, it could make sense to incorporate along with the slope \eqref{li} a \emph{nonlocal} analogue of \eqref{li} as well as their \emph{strict} (outer) extensions along the lines of \cite{Kru15,Kru15.2}.
In the Banach space setting and with $\rho$ appropriately defined (cf. Corollary~\ref{T4}), one can try to define a dual space counterpart of \eqref{li} and formulate subdifferential consequences of Theorem~\ref{main} exploiting the original idea of Borwein and Preiss \cite{BorPre87}.

This type of conditions should be useful when developing ``smooth'' criteria of error bounds and metric (H\"older) (sub-)regularity along the lines of \cite{Kru15,Kru15.2}.

The case $N<\infty$ is also of interest and can be handled in a similar way.
The appropriate definitions of slopes can be derived from condition \eqref{us10} (or its `$m$-free' consequence \eqref{ss10-2}).

This topic goes beyond the scope of the current article and is left for future research.
Extending Theorem~\ref{main} and its corollaries to vector-valued functions seems to be another interesting direction of future research.

\section*{Acknowledgments}
The research was supported by the Australian Research Council, project DP110102011; Naresuan University, and Thailand Research Fund, the Royal Golden Jubilee Ph.D. Program.

\section*{References}

\bibliographystyle{elsarticle-num}
\bibliography{buch-kr,kruger,kr-tmp}
\end{document}